\documentclass[a4j,11pt]{article}

\usepackage{natbib}
\usepackage[dvipdfmx,hiresbb]{graphicx}
\usepackage[dvipdfmx]{color}
\usepackage{amsmath}
\usepackage{amsthm}
\usepackage{ascmac}
\usepackage{amssymb}
\usepackage{color}
\usepackage{lastpage}
\usepackage{natbib}

\numberwithin{equation}{section}
\newtheorem{thm}{Theorem}
\newtheorem{cor}[thm]{Corollary}

\newtheorem{prop}[thm]{Proposition}
\newtheorem{rem}{Remark}

\def\ve{\varepsilon}
\def\var{\mbox{\rm var}}
\def\ne{{\rm e}}

\title{
A reversal phenomenon in estimation based on multiple samples from the Poisson--Dirichlet distribution
}
\author{Koji Tsukuda\footnote{Graduate School of Arts and Sciences, The University of Tokyo, 3-8-1 Komaba, Meguro-ku, Tokyo 153-8902, Japan. mail: ctsukuda@g.ecc.u-tokyo.ac.jp}; 
Shuhei Mano\footnote{Department of Mathematical Analysis and Statistical Inference, The Institute of Statistical Mathematics, 10-3 Midori-cho, Tachikawa, Tokyo, 190-8562, Japan. mail: smano@ism.ac.jp}}

\date{}

\begin{document}
\maketitle

\begin{abstract}
Consider two forms of sampling from a population: (i) drawing $s$ samples of $n$ elements with replacement and (ii) drawing a single sample of $ns$ elements.
In this paper, under the setting where the descending order population frequency follows the Poisson--Dirichlet distribution with parameter $\theta$, we report that the magnitude relation of the Fisher information, which sample partitions converted from samples (i) and (ii) possess, can change depending on the parameters, $n$, $s$, and $\theta$.
Roughly speaking, if $\theta$ is small relative to $n$ and $s$, the Fisher information of (i) is larger than that of (ii); on the contrary, if $\theta$ is large relative to $n$ and $s$, the Fisher information of (ii) is larger than that of (i).
The result represents one aspect of random distributions.
\\

\noindent MSC-2010. primary:62F10. secondary: 62D05, 92D10.  \\
key words and phrases: Bayesian nonparametrics; Fisher information; Microdata disclosure risk assessment; Population genetics; Random distribution.
\end{abstract}

\section{Introduction}
Let us consider the statistical modeling of a random sampling that produces data containing $ns$ records by drawing $s$ sets of $n$ elements with replacement, where $n$ and $s$ are positive integers.
Let $Z_{ij}$ be random variables corresponding $j(=1,\ldots,n)$-th element in $i(=1,\ldots,s)$-th sample.
For example, when we are interested in the population mean $\mu$ and the normal distribution ${\sf N}(\mu,1)$ with unit variance is used as a population model, the sample mean $\bar{Z}_{\cdot\cdot}$ of $ns$ records, which is a complete sufficient statistic, coincides with the sample mean of $s$ sample means $\bar{Z}_{1\cdot},\ldots,\bar{Z}_{n\cdot}$, which are calculated from each sample set of size $n$.
There exists a relation 
\[ ns \bar{Z}_{\cdot\cdot} = n \sum_{i=1}^s \bar{Z}_{i\cdot}, \]
and both samples have the same information for $\mu$.
Under the same setting, if we use the sample median $\tilde{Z}_{\cdot\cdot}$ of $ns$ records to estimate the population mean $\mu$ (which is identical to the population median in the case of ${\sf N}(\mu,1)$), which is not a sufficient statistic, have different information from one of which the sample mean $\bar{\tilde{Z}}_{\cdot\cdot}$ of $s$ sample medians $\tilde{Z}_{1\cdot},\ldots,\tilde{Z}_{n\cdot}$, which are calculated from each sample set of size $n$.
This implies that when to pool affects the efficiency of inference.
However, both of the asymptotic distributions of $\tilde{Z}_{\cdot\cdot}$ and $\bar{\tilde{Z}}_{\cdot\cdot}$ as $n\to\infty$ are ${\sf N}(\mu,\pi/(2ns))$, which indicates that they asymptotically have the same Fisher information, and so the efficiencies of inference asymptotically coincide.

In this paper, we report a more curious phenomenon that the magnitude relation of the Fisher information reverses depending on the values of parameters.
For the purpose of inferring the population diversity, the Poisson--Dirichlet distribution \citep{RefK} with parameter $\theta$ is assumed as a statistical model of the descending order population frequency.
If $\theta$ is known, the homozygosity ($1/(1+\theta)$), the probability of discovering a new species ($\theta/(n+\theta)$ where $n$ is the sample size), the expected number of singletons (population uniques) in a finite population ($n_0 \theta/(n_0 +\theta-1)$ where $n_0$ is the population size), and so on can be determined.
Moreover, the Poisson--Dirichlet distribution prior (or the Dirichlet process prior) plays an important role in the nonparametric Bayesian estimation when there exist possibly infinite labels. 
For example, see \cite{RefC} and its discussions; \cite{RefE,RefHT,RefSa}.
Therefore, estimating $\theta$ efficiently is a problem and we will show that sampling procedure influence the magnitude relation of the Fisher information, so our result can be applied to considering sampling procedures for estimating $\theta$ efficiently.

\begin{rem}
Professor Masaaki Sibuya pointed out that our result represents an aspects of differences between independently and identically distributed (iid) sequences and non-iid sequences (in this paper exchangeable sequences are considered).
\end{rem}

\section{Sampling from the Poisson--Dirichlet distribution}

\subsection{Sampling procedures}
Assume that the descending order frequency of the infinite population follows the Poisson--Dirichlet distribution with parameter $\theta$.
For details about the properties of the Poisson--Dirichlet distribution and related distributions, we refer the reader to \cite{RefF10}.
In this paper, the following two types of samples are considered:

\begin{itemize}
\item \textbf{Sample (i)}\\
Drawing $s$ samples of $n$ elements with replacement.
The numbers of distinct components in the respective sample partitions are denoted by $X_1,\ldots,X_s$.
\item \textbf{Sample (ii)}\\
Drawing a single sample of $ns$ elements.
The number of distinct components in the sample partition is denoted by $Y$.
\end{itemize}

If a sample of $m$ elements is drawn from a population that follows the Poisson--Dirichlet distribution with parameter $\theta$, then we have a sample frequency.
When the population diversity is in interest, we convert the sample frequency to a sample partition, and the sample partition follows the Ewens sampling formula with parameter $(m,\theta)$.
Then, the number of distinct components in the sample partition, which is the number of labels in the sample, follows the falling factorial distribution with parameter $(m,\theta)$ (this distribution is described in the next subsection).
Note that the number of distinct components in the sample is the complete sufficient statistic for $\theta$.
Hence, $X_1,\ldots,X_s$ independently and identically follow the falling factorial distribution with parameter $(n,\theta)$, and $Y$ follows the falling factorial distribution with parameter $(ns,\theta)$.

Even though the total numbers of elements are $ns$ in both sampling processes, Samples (i) and (ii) contain different information.
In this paper, we calculate the Fisher information acquired from the samples obtained using these two sampling processes and demonstrate that the magnitude relation of the Fisher information can be reversed based on the values of the parameters $n$, $s$ and $\theta$.

In the literature, there is another model called the multi-observer ESF \citep{RefBT}, where the 2-observer version is considered in \cite{RefECLW}.
In the model, a sample of size $n_0$ consists of $S$ subsamples whose sizes are $n_1,\ldots,n_S$ where $n_1+\cdots+n_S=n_0$.
The motivation of \cite{RefBT} is testing of the spatial homogeneity of tumors, which means the parameters $\theta$ are common in the left side and right side of a tumor.
In the tumor example, when a common underlying evolution process is considered, Sample (i) fits the situation because the population frequencies are considered to be different even in the same side (see the data in Table 2.1 of \cite{RefBT}, or its original methylation patterns  data in Fig 2 of \cite{RefSMWTS}).

\begin{rem}
As for the multi-observer ESF, the marginal distributions of subsamples are also the Ewens sampling formula.
Even if $s$ subsamples in the whole $S$ subsamples are considered with $n_1+\cdots+n_s << n_0$, $s$ subsamples are not regarded as independent.
However, they are exchangeable, and conditionally iid when the population frequency is fixed to one realization.
On the other hand, our Sample (i) corresponds to the case where there are different $s$ population frequencies.
\end{rem}

\begin{rem}\label{remH}
In the context of microdata disclosure risk assessments, the Ewens sampling formula has been used as a model for the frequency of frequencies, which is sometimes called the size index.
See, for example, \cite{RefHT} and \cite{RefSa}.
The problem considered here fits the situation where an investigation is conducted in $s$ areas and $n$ individuals are surveyed in each area with an assumption on the Poisson--Dirichlet hyperpopulation.
Sample (i) corresponds to the case where there are different $s$ population frequencies for each area, and Sample (ii) corresponds to the case where there is only one population frequency.
When collecting survey data, it is natural to consider that sample sizes are different area-by-area.
However, to clearly observe the reversal phenomenon of the Fisher information, we assume that the sample sizes of all areas are the same.
\end{rem}

\begin{rem}
Consider there exists possibly infinite number of labels, and we have sample frequency with finite number of labels.
In such situation, the Poisson-Dirichlet distribution can be used as a prior distribution, and in order to conduct empirical Bayes estimation its parameter should be estimated.
Suppose there are $s$ samples of size $n$.
If the samples are drawn from different populations which independently follow the Poisson--Dirichlet distribution then Sample (i) is suitable; on the other hand if the samples are drawn from one population which follows the Poisson--Dirichlet distribution then Sample (ii) is suitable.
\end{rem}

\subsection{The falling factorial distribution}

If the probability distribution of a positive-integer-valued random variable $X$ is given by
\[ {\rm pr}( X =x) = f(x,\theta) = \frac{\bar{s}(n,x) \theta^x}{(\theta)_n}, \quad (x=1,2,\ldots,n), \]
then the distribution of $X$ is sometimes called the falling factorial distribution with parameter $(n,\theta)$ \citep{RefW1}, where $(\theta)_n = \theta (\theta+1) \cdots (\theta+n-1)$ and $\bar{s}(n,x)$ is the coefficient of $\theta^x$ in $(\theta)_n$.
The falling factorial distribution is also called STR1F in \cite{RefS0}.
In a random partition following the Ewens sampling formula with parameter $(n,\theta)$, the number of distinct components follows the falling factorial distribution with parameter $(n,\theta)$ \citep{RefE}.
Since the number of distinct components is the complete sufficient statistic for $\theta$, considering the falling factorial distribution is enough in order to estimate $\theta$.
In this subsection, we describe the known properties of a random variable $X$ which follows the falling factorial distribution with parameter $(n,\theta)$.

The moment generating function of $X$ is given as
\[ E[\ne^{X t}] 
= \sum_{i=1}^n  \frac{\bar{s}(n,i) (\theta \ne^t)^i}{(\theta)_n}
= \frac{(\theta \ne^t)_n}{(\theta)_n} 
= \frac{\Gamma(\theta \ne^t +n) \Gamma(\theta)}{\Gamma(\theta+n)\Gamma(\theta \ne^t)}, \]
where $\Gamma(\cdot)$ is the gamma function.
Letting
\[ L_n(\theta)= \sum_{i=1}^n \frac{1}{\theta+i-1},
\quad
\ell_n(\theta) = \sum_{i=1}^n \frac{i-1}{(\theta+i-1)^2}, \]
the mean and variance of $X$ can be written as 
\[ E[X] = \theta L_n(\theta), \quad
 \var(X) = \theta \ell_n(\theta). \]

Let 
\[ l_\theta(x) = \log (f(x,\theta)) = \log\left( \frac{\bar{s}(n,x) \theta^x}{(\theta)_n} \right). \]
Then, the log-likelihood is $l_\theta(X)$ and its derivative $\dot{l}_{\theta}(X)$ with respect to $\theta$ is
\[ \dot{l}_\theta(X) = \frac{\partial}{\partial \theta} l_\theta(X) = \frac{X}{\theta} - \sum_{i=1}^n \frac{1}{\theta+i-1}. \]
Since $E[\dot{l}_\theta(X)]=0$, the Fisher information of $\theta$ is
\[ E \left[\left( \dot{l}_\theta (X) \right)^2 \right]  = \frac{1}{\theta^2} \var(X) = \frac{\ell_n(\theta)}{\theta} . \]

The maximum likelihood estimator $\hat\theta$ of $\theta$ is the root of
\[  X - \sum_{i=1}^n \frac{\hat\theta}{\hat\theta+i-1} = 0. \]
It is well known that if $\theta$ is fixed then $\hat\theta$ enjoys the asymptotic normality as $n\to \infty$:
\[ \sqrt{\log{n}} (\hat\theta - \theta) \Rightarrow {\sf N} \left( 0, \theta \right) \]
as $n\to\infty$, where $\Rightarrow$ denotes the convergence in distribution.
Moreover, it may be useful for some situation to consider the case where $\theta$ grows with $n$ \citep{RefF07,RefGT,RefT}. 
In particular, the asymptotic normality of $\hat\theta$ has been extended to the asymptotic case where both $n$ and $\theta$ tend to infinity \citep{RefT}:
\[ \sqrt{\frac{\ell_n(\theta)}{\theta}} (\hat\theta - \theta) \Rightarrow {\sf N} \left( 0, 1 \right) \]
as $n,\theta\to\infty$ with $n^2/\theta \to \infty$, or as $n\to\infty$ leaving $\theta>0$ fixed.
The asymptotic variance $\theta/\ell_n(\theta)$ of $\hat\theta$ generally does not become $\theta/\log{n}$.
Indeed if $n/\theta\to\infty$ then $\ell_n(\theta)\sim\log(n/\theta)$ and if $n/\theta\to0$ then $\ell_n(\theta)\sim n^2/(2\theta^2)$, where $n$ and $\theta$ grow simultaneously.
Therefore, it holds that if $n/\theta \to \infty$ then $\theta/\ell_n(\theta)\sim\theta/\log(n/\theta)$; further, if $n/\theta \to 0$ then $\theta/\ell_n(\theta)\sim2\theta^3/n^2$.

\section{The Fisher information of two samples}

In this section, the Fisher information of Samples (i) and (ii) is calculated, and the maximum likelihood estimators are presented.

\subsection{Sample (i)}
Since $X_1,\ldots,X_s$ independently and identically follow the falling factorial distribution with a parameter $(n,\theta)$, the likelihood is given by
\[ \prod_{k=1}^s f(X_k,\theta). \]
Thus, the log-likelihood $M_\theta$ is
\[M_\theta =  \sum_{k=1}^s l_\theta(X_k), \]
so its derivative with respect to $\theta$ is
\[\dot{M}_\theta 
= \sum_{k=1}^s \dot{l}_\theta(X_k)  
= \sum_{k=1}^s \left( \frac{X_k}{\theta} - \sum_{i=1}^n \frac{1}{\theta+i-1} \right) 
= \sum_{k=1}^s \frac{X_k}{\theta} - \sum_{i=1}^n \frac{s}{\theta+i-1}.
\]
It follows from $E[\dot{M}_\theta]=0$ and from the independence of $\{ \dot{l}_\theta(X_k) \}_{k=1}^s $ that the Fisher information $\mathcal{I}_1(\theta;n,s)$ with respect to $\theta$ is
\[ \mathcal{I}_1(\theta;n,s)
= E \left[\left( \dot{M}_\theta \right)^2 \right]  
= \var(\dot{M}_\theta)
=  \sum_{k=1}^s \var \left(  \dot{l}_\theta(X_k) \right)
=  \frac{s\ell_n(\theta) }{\theta}. \]

The maximum likelihood estimator $\hat\theta^{(1)}$ is the root of
\[\bar{X} - \sum_{i=1}^n \frac{\hat\theta^{(1)}}{\hat\theta^{(1)}+i-1}  = 0, \]
where $\bar{X}= s^{-1} T$ denotes the sample mean of $\{X_k \}_{k=1}^s $ where
\begin{equation}
T= \sum_{i=1}^s X_i . \label{defT}
\end{equation}

\begin{rem}\label{remclt}
The moment generating function of $T$ is
\begin{equation}
E \left[ \ne^{T t} \right] 
= \left(  \frac{ (\theta \ne^{t})_n }{(\theta)_n} \right)^s 
= \prod_{j=1}^n \left( \frac{\theta }{\theta+j-1}\ne^t + \frac{j-1 }{\theta+j-1} \right)^s. \label{mgfT}
\end{equation}
Therefore, when $s\neq 1$, $T$ does not follow the falling factorial distribution.
Moreover, when $n\to\infty$, if and only if $sn^2/\theta \to \infty$, 
\[ \frac{T - s \theta L_n(\theta)}{ \sqrt{s\theta \ell_n(\theta)} }  \Rightarrow {\sf N}(0,1) \]
(see Proposition \ref{PAN} in Appendix).
\end{rem}

\subsection{Sample (ii)}
Since $Y$ follows the falling factorial distribution with parameter $(ns,\theta)$, the Fisher information $\mathcal{I}_2(\theta;n,s)$ with respect to $\theta$ is given as
\[\mathcal{I}_2(\theta;n,s) = \frac{1}{\theta} \sum_{i=1}^{ns} \frac{i-1}{(\theta+i-1)^2} = \frac{\ell_{ns}(\theta)}{\theta} . \]

The maximum likelihood estimator $\hat\theta^{(2)}$ is the root of
\[ Y - \sum_{i=1}^{ns} \frac{\hat\theta^{(2)}}{\hat\theta^{(2)}+i-1} = 0. \]

\section{Comparing the two samples}
As is stated before, the Fisher information of Samples (i) and (ii) --$\mathcal{I}_1(\theta;n,s)$ and $\mathcal{I}_2(\theta;n,s)$, respectively-- are different even though the total numbers of elements in the two Samples are the same.
Considering simple asymptotic settings, the asymptotic magnitude relation between $\mathcal{I}_1(\theta;n,s)$ and $\mathcal{I}_2(\theta;n,s)$ can be observed as follows:
If $n\to\infty$ leaving $s(\geq2)$ and $\theta$ fixed, then
\[ \mathcal{I}_1(\theta;n,s) \sim \frac{s \log{n}}{\theta}  ,\quad \mathcal{I}_2(\theta;n,s) \sim \frac{\log{n}}{\theta} , \]
so $\mathcal{I}_1(\theta;n,s) > \mathcal{I}_2(\theta;n,s)$ asymptotically;
if $s\to\infty$ leaving $n(\geq2)$ and $\theta$ fixed, then
\[ \mathcal{I}_1(\theta;n,s) = \frac{s \ell_n(\theta)}{\theta}  ,\quad \mathcal{I}_2(\theta;n,s) \sim \frac{\log{s}}{\theta} , \]
so $\mathcal{I}_1(\theta;n,s) > \mathcal{I}_2(\theta;n,s)$ asymptotically.
On the contrary, if $n=1$ and $s\geq2$, $\mathcal{I}_1(\theta;1,s) = 0 <\mathcal{I}_2(\theta;1,s)$,  so $\mathcal{I}_1(\theta;n,s) < \mathcal{I}_2(\theta;n,s)$.
These observations indicate that the magnitude relation can be reversed based on the values of the parameters $n,s$ and $\theta$.
To demonstrate this phenomenon more precisely, non-asymptotic sufficient conditions that guarantee the magnitude relation between $\mathcal{I}_1(\theta;n,s)$ and $\mathcal{I}_2(\theta;n,s)$ are provided in Subsection \ref{NAR}.
Moreover, the asymptotic conditions that guarantee the asymptotic magnitude relation where a pair of parameters either $(n,\theta)$, $(s,\theta)$, or $(n,s)$, grows simultaneously leaving the rest parameter fixed are provided in Subsection \ref{AR}.

\subsection{Non-asymptotic results}\label{NAR}
In this subsection, let us provide sufficient conditions under which $s\ell_n(\theta) > \ell_{ns}(\theta)$ and a sufficient condition under which $s\ell_n(\theta) < \ell_{ns}(\theta)$.
The results indicate that, roughly speaking, the former inequality holds when $\theta$ is relatively small in comparison to $n$ and $s$, and the latter inequality holds when $\theta$ is relatively large in comparison to that relative to $n$ and $s$.

Firstly, we provide a sufficient condition under which $s\ell_n(\theta) > \ell_{ns}(\theta)$.

\begin{thm}\label{th0}
(1) When $\theta\geq2$, if integers $s$ and $n$ satisfy $s\geq2$ and
\begin{equation}
n > 1+ \frac{\lfloor \theta \rfloor  -1 + \frac{1}{s}}{\ell_{\lfloor \theta \rfloor}(\theta)} , \label{cop2}
\end{equation}
then
$s \ell_n(\theta) > \ell_{ns}(\theta)$.\\
(2) When $\theta<2$, if integers $s$ and $n$ satisfy $s\geq2$ and
\begin{equation}
n > 1+(\theta+1)^2\left(\frac{1}{s} + 1\right), \label{cop3}
\end{equation}
then
$s \ell_n(\theta) > \ell_{ns}(\theta)$.
\end{thm}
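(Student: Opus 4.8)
The plan is to reduce the inequality $s\ell_n(\theta)>\ell_{ns}(\theta)$ to a statement about a single ``tail'' and then estimate that tail by comparison with an integral. Writing $h(x)=x/(\theta+x)^2$, so that $\ell_m(\theta)=\sum_{j=1}^{m-1}h(j)$, the starting point is the identity
\[ s\ell_n(\theta)-\ell_{ns}(\theta)=(s-1)\ell_n(\theta)-\sum_{j=n}^{ns-1}h(j), \]
which follows from $\ell_{ns}(\theta)-\ell_n(\theta)=\sum_{j=n}^{ns-1}h(j)$. Grouping the tail into $s-1$ consecutive blocks of length $n$, one may rewrite the right-hand side as $\sum_{k=1}^{s-1}\bigl(\ell_n(\theta)-B_k\bigr)$, where $B_k=\sum_{m=0}^{n-1}h(kn+m)$ is the $k$-th block sum. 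I would record two structural facts about $h$: from $h'(x)=(\theta-x)/(\theta+x)^3$ it increases on $[0,\theta]$ and decreases on $[\theta,\infty)$, and it has the explicit antiderivative $\log(\theta+x)+\theta/(\theta+x)$, which will drive every estimate.

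First I would check that the hypotheses \eqref{cop2} and \eqref{cop3} force $n-1>\theta$; this is a short computation but it is essential, because it places the whole tail in the region where $h$ is decreasing. Consequently each block sum $B_k$ is decreasing in $k$, so $B_1$ is the largest, and the ``base'' sum $\ell_n(\theta)$, which contains the peak at $x=\theta$, is the natural candidate to dominate it. The problem thus concentrates on the single comparison $\ell_n(\theta)\ge B_1=\ell_{2n}(\theta)-\ell_n(\theta)$, with the strictly smaller later blocks contributing only slack; that slack is the source of the small $1/s$ improvement in the thresholds (so that larger $s$ relaxes the condition). The tail itself I would bound from above by $\int_{n-1}^{ns-1}h(x)\,dx$, evaluated through the antiderivative above.

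The two parts of the theorem then differ only in how $\ell_n(\theta)$ is bounded from below. For $\theta\ge2$ I would split off the peak, writing $\ell_n(\theta)=\ell_{\lfloor\theta\rfloor}(\theta)+\sum_{j=\lfloor\theta\rfloor}^{n-1}h(j)$ and estimating the second sum below by an integral (the non-monotone stretch near the peak, of length below one, causing only a bounded correction); the accumulated peak mass $\ell_{\lfloor\theta\rfloor}(\theta)$ is exactly the quantity that appears in \eqref{cop2}, and rearranging the resulting inequality into a threshold on $n$ produces $(\lfloor\theta\rfloor-1+1/s)/\ell_{\lfloor\theta\rfloor}(\theta)$. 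For $\theta<2$ one has $\lfloor\theta\rfloor\le1$ and hence $\ell_{\lfloor\theta\rfloor}(\theta)=0$, so this split is vacuous and \eqref{cop2} is meaningless; this is precisely why the small-$\theta$ case must be isolated. There I would instead use a direct integral lower bound on $\ell_n(\theta)$ in which the peak scale $h(1)=1/(\theta+1)^2$ sets the constant, which is what makes the factor $(\theta+1)^2$ surface in \eqref{cop3}.

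The main obstacle is the non-monotonicity of $h$ around its peak. On the increasing part $[0,\theta]$ one even has $h(j)<h(kn+j)$ for the smallest indices, so a term-by-term comparison of the base with a later block fails; only the aggregate (integral) estimate, which lets the large peak terms compensate for the small initial ones, recovers the inequality. Keeping these aggregate estimates loose enough to stay clean yet tight enough to land exactly on the constants $\lfloor\theta\rfloor-1+1/s$ and $(\theta+1)^2(1/s+1)$, rather than on order-of-magnitude surrogates, is the delicate part; by contrast the identity, the antiderivative, and the monotonicity of the block sums are all routine once the side condition $n-1>\theta$ has been secured.
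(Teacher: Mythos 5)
Your skeleton coincides with the paper's: the same starting identity $s\ell_n(\theta)-\ell_{ns}(\theta)=(s-1)\ell_n(\theta)-\sum_{j=n}^{ns-1}h(j)$ with $h(x)=x/(\theta+x)^2$, the same preliminary verification that \eqref{cop2} and \eqref{cop3} force $n>\theta$ so the entire tail lies on the decreasing branch of $h$, and the same isolation of $\ell_{\lfloor\theta\rfloor}(\theta)$ in case (1) and of $h(1)=1/(\theta+1)^2$ in case (2) as the essential lower bound on $\ell_n(\theta)$ (including the correct diagnosis that $\ell_{\lfloor\theta\rfloor}(\theta)=0$ for $\theta<2$ is what forces the case split). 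The gap is in the quantitative engine you propose for the tail. The stated thresholds are rational in $n$, of the form $n-1>c(\theta,s)$, and in the paper they come from purely term-by-term domination: the $(n-1)(s-1)$ indices $n\le j\le ns-s$ contribute at most $h(n)<1/(n-1)$ each, and the last $s-1$ indices contribute at most $h(ns-s+1)<1/(s(n-1))$ each, so after factoring out $s-1$ the quantity to be made positive is exactly $\ell_{\lfloor\theta\rfloor}(\theta)-(\lfloor\theta\rfloor-1+1/s)/(n-1)$; the numerator $\lfloor\theta\rfloor-1$ is the count of retained peak terms and the $1/s$ is the end-block correction. Your bound $\int_{n-1}^{ns-1}h(x)\,dx=\log\frac{\theta+ns-1}{\theta+n-1}+\frac{\theta}{\theta+ns-1}-\frac{\theta}{\theta+n-1}$ is of order $\log s$, and pairing it with an integral lower bound on $\sum_{j=\lfloor\theta\rfloor}^{n-1}h(j)$ produces a sufficient condition of the form $\log((\theta+n-1)/\theta)>C(\theta,s)$, i.e.\ a threshold depending on $\ell_{\lfloor\theta\rfloor}(\theta)$ through $e^{-\ell_{\lfloor\theta\rfloor}(\theta)}$ rather than through $1/\ell_{\lfloor\theta\rfloor}(\theta)$. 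So the asserted final step (``rearranging \dots produces $(\lfloor\theta\rfloor-1+1/s)/\ell_{\lfloor\theta\rfloor}(\theta)$'') does not go through: you would prove a qualitatively similar theorem with different constants, not the stated one.

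A second, related soft spot is the reduction to the single comparison $\ell_n(\theta)\ge B_1=\ell_{2n}(\theta)-\ell_n(\theta)$. Since $B_1\le n\,h(n)<1$ while $\ell_{\lfloor\theta\rfloor}(\theta)<1/2$ (the paper proves this bound itself), the peak mass alone cannot dominate $B_1$; the bulk $\sum_{j=\lfloor\theta\rfloor}^{n-1}h(j)$ must carry the comparison, which again pushes you toward logarithmic conditions on $n$. The fix is simply to abandon the integral comparisons: keep your identity and the unimodality observation, bound every tail term by the first term of its piece, and use $h(m)<1/(m-1)$. That is the paper's proof, and it is what makes the constants $\lfloor\theta\rfloor-1+1/s$ and $(\theta+1)^2(1+1/s)$ fall out exactly.
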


\begin{proof}
First, note that
$x \mapsto x(\theta+x)^{-2}$
is decreasing as $x$ increases in the range $x>\theta$ because its derivative is
$x \mapsto (\theta-x)(\theta+x)^{-3}$.
Moreover, either the hypothesis of (1) or (2) implies $n>\theta$, since
\begin{eqnarray*}
1+ \frac{\lfloor \theta \rfloor  -1 + \frac{1}{s}}{\ell_{\lfloor \theta \rfloor}(\theta)} -\theta 
&>& 1+ \frac{\lfloor \theta \rfloor  -1 + \frac{1}{s}}{1/2} -\theta
= 2\lfloor \theta \rfloor  -1 + \frac{2}{s} -\theta \\
&\geq& \lfloor \theta \rfloor  -2 + \frac{2}{s},
\end{eqnarray*}
where
\[ \ell_{\lfloor \theta \rfloor}(\theta) 
< \frac{\lfloor \theta \rfloor(\lfloor \theta \rfloor-1)}{2\theta^2} 
< \frac{\theta ( \theta -1)}{2\theta^2} 
< \frac{1}{2} \]
are used in the first inequality and
\[ 1+(\theta+1)^2\left(\frac{1}{s} + 1\right) > \theta. \]
Therefore,
\begin{eqnarray}
&& s \ell_n(\theta) - \ell_{ns}(\theta) \nonumber \\
&=& (s-1) \sum_{i=2}^{n} \frac{i-1}{(\theta+i-1)^2} - \sum_{i=n+1}^{ns-s+1} \frac{i-1}{(\theta+i-1)^2} - \sum_{i=ns-s+2}^{ns} \frac{i-1}{(\theta+i-1)^2} \nonumber \\
&>&  (s-1) \sum_{i=2}^n \frac{i-1}{(\theta+i-1)^2} - (ns-n-s+1) \frac{n}{(\theta+n)^2} \\&& \quad - (s-1) \frac{ns -s +1}{(\theta+ns -s +1)^2} \nonumber \\
&=& (s-1) \left[ \sum_{i=2}^n \left\{ \frac{i-1}{(\theta+i-1)^2} -  \frac{n}{(\theta+n)^2} \right\} - \frac{ns -s +1}{(\theta+ns -s +1)^2} \right]. \label{RH1}
\end{eqnarray}
Hereafter, the assertions (1) and (2) are considered separately.

(1)
When $\theta\geq 2$, the right-hand side of \eqref{RH1} is larger than
\[
(s-1) \left[ \sum_{i=2}^{\lfloor \theta \rfloor} \left\{ \frac{i-1}{(\theta+i-1)^2} -  \frac{n}{(\theta+n)^2} \right\} - \frac{ns -s +1}{(\theta+ns -s +1)^2} \right].
\]
Further, if the hypothesis is true then this display is positive, because the terms enclosed in the brackets are
\begin{eqnarray*}
&&\sum_{i=2}^{\lfloor \theta \rfloor} \left\{ \frac{i-1}{(\theta+i-1)^2} -  \frac{n}{(\theta+n)^2} \right\} - \frac{ns -s +1}{(\theta+ns -s +1)^2} \\
&=& \ell_{\lfloor \theta \rfloor} (\theta) - (\lfloor\theta\rfloor-1) \frac{n}{(\theta+n)^2}  - \frac{ns -s+ 1}{(\theta+ns - s +1)^2} \\
&>& \ell_{\lfloor \theta \rfloor} (\theta) - (\lfloor\theta\rfloor-1) \frac{1}{n-1}  - \frac{1}{s(n - 1) } \\
&=& \ell_{\lfloor \theta \rfloor} (\theta) - \left( \lfloor\theta \rfloor -1 + \frac{1}{s} \right) \frac{1}{n-1}.
\end{eqnarray*}

(2)
When $\theta< 2$, the right-hand side of \eqref{RH1} is larger than
\[
(s-1) \left\{ \frac{1}{(\theta+1)^2} -  \frac{n}{(\theta+n)^2} - \frac{ns -s +1}{(\theta+ns -s +1)^2} \right\}.
\]
Further, if the hypothesis is true then this display is positive, because the terms enclosed in the brackets are
\begin{eqnarray*}
&& \frac{1}{(\theta+1)^2} -  \frac{n}{(\theta+n)^2} - \frac{ns -s +1}{(\theta+ns -s +1)^2} \\
&>& \frac{1}{(\theta+1)^2} -  \frac{1}{n-1} - \frac{1}{s(n -1)} \\
&=& \frac{1}{(\theta+1)^2} -  \left( 1 + \frac{1}{s} \right) \frac{1}{n-1}.
\end{eqnarray*}

This completes the proof. 
\end{proof}

\begin{rem}
To evaluate \eqref{cop2}, some values of
\begin{equation} 
1+ \frac{\lfloor \theta \rfloor  -1 + \frac{1}{s}}{\ell_{\lfloor \theta \rfloor}(\theta)}  \label{conr}
\end{equation}
are shown in Table~\ref{conv}.
The condition given in Theorem \ref{th0} is only a sufficient condition; when $(n,s,\theta)=(7,2,3)$, $s\ell_n(\theta) - \ell_{ns}(\theta) = 0.042>0$.
\end{rem}

\begin{table}[t]
\centering
\caption{Values of \eqref{conr}}
  \begin{tabular}{cccccccc}
 \hline
& $\theta=$ & $2$ & $3$ & $4$ & $5$ & $10$ & $100$ \\ \hline
$s=2$ & & 13.50 & 17.54 & 22.32 & 27.30 & 52.83 & 518.53 \\ 
$s=3$ & & 12.00 & 16.37 & 21.26 & 26.29 & 51.90 & 517.66 \\ 
$s=10$ & & 9.90 & 14.74 & 19.77 & 24.87 & 50.61 & 516.44 \\ 
\hline
  \end{tabular}
\label{conv}
\end{table}

If $\theta\leq1$ then \eqref{cop3} is satisfied for $n \geq 8$ and $s\geq2$.
Therefore, we have the following corollary.

\begin{cor}
If $\theta\leq1$, then $\mathcal{I}_1(\theta;n,s) > \mathcal{I}_2(\theta;n,s)$ for any integers $n\geq8$ and $s\geq2$.
\end{cor}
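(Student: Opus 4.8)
The plan is to deduce the corollary directly from part (2) of Theorem~\ref{th0}, since the hypothesis $\theta \le 1$ falls squarely into the regime $\theta < 2$ treated there. First I would recall that, by the formulas $\mathcal{I}_1(\theta;n,s) = s\ell_n(\theta)/\theta$ and $\mathcal{I}_2(\theta;n,s) = \ell_{ns}(\theta)/\theta$ established in Section~3, the desired inequality $\mathcal{I}_1(\theta;n,s) > \mathcal{I}_2(\theta;n,s)$ is equivalent (the common factor $1/\theta$ being positive) to $s\ell_n(\theta) > \ell_{ns}(\theta)$, which is exactly the conclusion furnished by Theorem~\ref{th0}(2). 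Thus it suffices to check that the hypothesis \eqref{cop3} of that theorem, namely
\[ n > 1 + (\theta+1)^2\left(\frac{1}{s}+1\right), \]
holds for every $\theta \le 1$, every integer $s \ge 2$, and every integer $n \ge 8$.

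The key step is then a single monotonicity observation on the right-hand bound. Regarding it as a function of $\theta$ and $s$, I note that $(\theta+1)^2$ is increasing in $\theta>0$ while $1/s + 1$ is decreasing in $s$, and both factors are positive; hence the whole expression is maximized, over the range $\theta \le 1$ and $s \ge 2$, at the corner $\theta = 1$, $s = 2$. Substituting these values gives $1 + 4\cdot(3/2) = 7$. Therefore the bound never exceeds $7$, so the hypothesis $n \ge 8$ already forces \eqref{cop3}, and the conclusion of Theorem~\ref{th0}(2) applies to yield $s\ell_n(\theta) > \ell_{ns}(\theta)$, equivalently $\mathcal{I}_1(\theta;n,s) > \mathcal{I}_2(\theta;n,s)$.

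Since the analytic heavy lifting is entirely contained in Theorem~\ref{th0}, the only genuine content here is the elementary estimate of the threshold, so I do not anticipate a real obstacle. The one point that deserves explicit care is confirming that the sufficient condition is worst at the boundary $\theta = 1$, $s=2$ rather than somewhere in the interior of the admissible range; this is immediate from the separate monotonicities just noted, but it is worth stating so the reader sees precisely why the cutoff is $n \ge 8$ and not a larger value. I would also observe that $8$ is the smallest integer that works through this route, because at $\theta = 1$, $s=2$ the bound equals exactly $7$; whether the inequality persists at $n = 7$ would require a finer argument and is not claimed.
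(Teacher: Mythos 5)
Your proposal is correct and follows exactly the paper's route: the corollary is stated in the paper as an immediate consequence of Theorem~\ref{th0}(2), observing that for $\theta\leq1$ and $s\geq2$ the threshold $1+(\theta+1)^2(1/s+1)$ is at most $7$, so $n\geq8$ suffices. Your explicit monotonicity check at the corner $\theta=1$, $s=2$ just spells out what the paper leaves implicit.
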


Next, we provide another sufficient condition under which $s\ell_n(\theta) > \ell_{ns}(\theta)$.

\begin{thm}\label{thn}
If
\begin{equation}
1 \leq \theta \leq \left( \frac{s}{\log(1+ns)} \right)^{1/2} - 1, \label{assn1}
\end{equation}
then $s \ell_n(\theta) > \ell_{ns}(\theta)$ for any integers $n\geq2$ and $s\geq2$.
\end{thm}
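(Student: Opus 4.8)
The plan is to bound the two sides of $s\ell_n(\theta)>\ell_{ns}(\theta)$ separately by elementary quantities and then invoke the hypothesis \eqref{assn1}. For the left-hand side I would simply discard all but the leading term: writing $\ell_n(\theta)=\sum_{j=0}^{n-1} j/(\theta+j)^2$, the summand at $j=1$ already gives $\ell_n(\theta)\ge 1/(\theta+1)^2$ whenever $n\ge 2$, so that $s\ell_n(\theta)\ge s/(\theta+1)^2$. This is deliberately crude, but the assumption is tailored so that even this suffices, and it is the same $1/(\theta+1)^2$ that appeared in the proof of Theorem~\ref{th0}(2).

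For the right-hand side I would produce a logarithmic upper bound valid for $\theta\ge 1$. The key elementary inequality is $1/(\theta+j)\le \log\{(\theta+j)/(\theta+j-1)\}$ for $j\ge 1$, which follows from $-\log(1-x)\ge x$ applied with $x=1/(\theta+j)\in(0,1)$. Combining this with the trivial bound $j/(\theta+j)^2\le 1/(\theta+j)$ and summing over $j=1,\dots,ns-1$ makes the right-hand side telescope, yielding $\ell_{ns}(\theta)\le\log\{(\theta+ns-1)/\theta\}$. Finally, using $\theta\ge 1$ one checks $(\theta+ns-1)/\theta\le ns$ (equivalently $ns-1\le\theta(ns-1)$), which gives the clean bound $\ell_{ns}(\theta)\le\log(ns)$.

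It then remains to chain the estimates. The hypothesis \eqref{assn1} is equivalent to $(\theta+1)^2\log(1+ns)\le s$, that is, $\log(1+ns)\le s/(\theta+1)^2$. Hence $\ell_{ns}(\theta)\le\log(ns)<\log(1+ns)\le s/(\theta+1)^2\le s\ell_n(\theta)$, and the strict step $\log(ns)<\log(1+ns)$ propagates through the chain to deliver the strict conclusion $s\ell_n(\theta)>\ell_{ns}(\theta)$.

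I expect the only real work to be the upper bound on $\ell_{ns}(\theta)$; the lower bound and the final chaining are immediate. The delicate point is passing from the telescoped quantity $\log\{(\theta+ns-1)/\theta\}$ to $\log(ns)$, which is precisely where the assumption $\theta\ge 1$ (rather than merely $\theta>0$) is used, while keeping one inequality strict so that the final conclusion is strict.
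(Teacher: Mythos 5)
Your proposal is correct and follows essentially the same strategy as the paper's proof: bound $s\ell_n(\theta)$ below by $s/(\theta+1)^2$ using a single term of the sum, bound $\ell_{ns}(\theta)$ above by a quantity not exceeding $\log(1+ns)$, and close the argument with the hypothesis \eqref{assn1} rewritten as $(\theta+1)^2\log(1+ns)\le s$. The only difference is cosmetic: the paper obtains its upper bound $\ell_{ns}(\theta)<\log(1+ns/\theta)\le\log(1+ns)$ by citing inequalities for $L_n(\theta)$ from an earlier reference, whereas you derive the (slightly sharper) bound $\ell_{ns}(\theta)\le\log(ns)$ self-containedly via the telescoping estimate $1/(\theta+j)\le\log\{(\theta+j)/(\theta+j-1)\}$.
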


\begin{proof}
From
\[ \ell_n(\theta) = L_n(\theta) - \theta \sum_{i=1}^n \frac{1}{(\theta+i-1)^2}, \]
\[ \sum_{i=1}^n \frac{1}{(\theta+i-1)^2} \geq \frac{n}{\theta(n+\theta)}, \]
and
\[ L_n(\theta) < \log\left(1+\frac{n}{\theta} \right) + \frac{n}{\theta(\theta+n)} \]
(see the proof of Proposition 1 in \cite{RefT} for derivations of these inequalities), it follows that
\[ \ell_n(\theta) < \log\left(1+\frac{n}{\theta} \right) + \left(\frac{1}{\theta} - 1\right)\frac{n}{\theta+n}. \]
When $\theta\geq1$, this display implies that
\begin{eqnarray}
\ell_{ns} (\theta) 
&<& \log\left(1+\frac{ns}{\theta} \right) \label{pa20} \\
&\leq& \log\left(1+ ns \right). \label{pa2}
\end{eqnarray}
On the other hand, when $n\geq2$, it holds that
\begin{equation}
s \ell_{n} (\theta) \geq \frac{s}{(\theta+1)^2} . \label{pa1}
\end{equation}
Hence, \eqref{pa2} and \eqref{pa1} imply that
\[ s \ell_n(\theta) - \ell_{ns}(\theta) > \frac{s}{(\theta+1)^2} - \log\left(1+ ns \right) \geq 0, \]
where the last inequality follows from the assumption \eqref{assn1}.
This completes the proof.
\end{proof}

\begin{rem}\label{remn2}
By using \eqref{pa20} instead of \eqref{pa2}, we can loose \eqref{assn1} to
\[
1 \leq \theta \leq \left( \frac{s}{\log\left(1+{ns}/{\theta}\right)} \right)^{1/2} - 1. 
\]
\end{rem}

\begin{rem}
By assuming more direct condition, we may have sharper sufficient conditions.
For example, if
\begin{equation}\label{dc1}
\ell_n(\theta) >  \frac{\log{s}}{s+1} - \frac{n\theta}{(n+1+\theta)(ns+1+\theta)}
\end{equation}
then $s\ell_n(\theta) - \ell_{ns}(\theta)>0$.
That is because 
\begin{eqnarray*}
s\ell_n(\theta) - \ell_{ns}(\theta) 
&>& (s-1) \ell_n(\theta)+ \log\left(\frac{n+\theta}{ns+\theta} \right) + \frac{(s-1)n\theta}{(n+1+\theta)(ns+1+\theta)} \\
&>& (s-1)\left\{ \ell_n(\theta)+ \frac{n\theta}{(n+1+\theta)(ns+1+\theta)} - \frac{\log{s}}{s+1} \right\},
\end{eqnarray*}
where we have used
\begin{eqnarray*}
\sum_{i=a}^b \frac{i}{(i+\theta)^2} 
&=& \sum_{i=a}^b \left\{ \frac{1}{i+\theta} - \frac{\theta}{(i+\theta)^2} \right\} \\
&<& \log\left( \frac{b+\theta}{a-1+\theta} \right) - \frac{\theta(b-a+1)}{(a+\theta)(b+1+\theta)} 
\end{eqnarray*}
for positive integers $a,b \ (a\leq b)$.
However, interpreting \eqref{dc1} in the current form may be difficult.
\end{rem}

Finally, we provide a sufficient condition under which $s\ell_n(\theta) < \ell_{ns}(\theta)$.

\begin{thm} \label{th1}
If
\begin{equation}
\theta > \sqrt{(n-1)(ns+n-1)},  \label{cop}
\end{equation}
then $s \ell_n(\theta) < \ell_{ns}(\theta)$ for any integers $n\geq1$ and $s\geq2$.
\end{thm}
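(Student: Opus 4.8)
The plan is to prove the equivalent inequality $\ell_{ns}(\theta) - s\ell_n(\theta) > 0$ by a term-by-term comparison, exploiting the fact that the sign of a difference of two summands of $\ell$ is governed entirely by whether $\theta^2$ exceeds a product of indices. First I would dispose of the degenerate case $n=1$: here $\ell_1(\theta)=0$, so $s\ell_1(\theta)=0 < \ell_s(\theta)$ for every $s\ge 2$, while the hypothesis \eqref{cop} reads $\theta>0$, which always holds. Hence I assume $n\ge 2$ henceforth.

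Writing $g(x) = x/(\theta+x)^2$, so that $\ell_m(\theta) = \sum_{i=1}^m g(i-1)$, the idea is to split the $ns$ summands of $\ell_{ns}(\theta)$ into $s$ consecutive blocks of length $n$ and match the $k$-th block against the $k$-th copy of $\ell_n(\theta)$ inside $s\ell_n(\theta)$. Concretely, pairing the index $i\in\{1,\ldots,n\}$ in the $k$-th copy ($k=1,\ldots,s$) with the index $j=(k-1)n+i$ of $\ell_{ns}(\theta)$ gives a bijection of the two summation ranges, so that
\[ \ell_{ns}(\theta) - s\ell_n(\theta) = \sum_{k=2}^{s}\sum_{i=1}^{n}\bigl\{ g((k-1)n+i-1) - g(i-1)\bigr\}, \]
the block $k=1$ cancelling identically.

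The key step is the elementary identity
\[ g(x)-g(y) = \frac{(x-y)(\theta^2 - xy)}{(\theta+x)^2(\theta+y)^2}, \]
valid for all $x,y\ge0$, which I would verify by clearing denominators: the numerator $x(\theta+y)^2 - y(\theta+x)^2$ factors as $(x-y)(\theta^2-xy)$. In each paired term above one has $x=(k-1)n+i-1 > y=i-1$ because $k\ge2$, so the sign of $g(x)-g(y)$ is precisely the sign of $\theta^2-xy$. It therefore suffices to check $\theta^2 > xy$ for every pair occurring in the double sum. Since $x$ and $y$ are increasing in their respective indices, the product $xy$ is maximized at $k=s$, $i=n$, where $x=ns-1$ and $y=n-1$, so $xy \le (ns-1)(n-1)$. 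Under hypothesis \eqref{cop} we have $\theta^2 > (n-1)(ns+n-1) > (n-1)(ns-1) \ge xy$, whence every summand is strictly positive and the whole sum is positive, giving $\ell_{ns}(\theta) > s\ell_n(\theta)$ as required.

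The conceptual content lies entirely in the factorization, which is already foreshadowed by the derivative $(\theta-x)(\theta+x)^{-3}$ of $x\mapsto x(\theta+x)^{-2}$ computed in the proof of Theorem \ref{th0}; the main obstacle is merely the bookkeeping needed to justify that $xy$ attains its maximum at the largest indices and that this maximum is dominated by the stated threshold. I note in passing that this argument in fact establishes the slightly stronger statement with \eqref{cop} replaced by $\theta>\sqrt{(n-1)(ns-1)}$, from which the theorem follows a fortiori.
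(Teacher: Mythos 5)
Your proof is correct and, once the paper's induction on $s$ is unrolled, rests on exactly the same decomposition and the same key factorization $x(\theta+y)^2-y(\theta+x)^2=(x-y)(\theta^2-xy)$; the only difference is organizational (you write $\ell_{ns}(\theta)-s\ell_n(\theta)$ directly as a double sum of paired terms, whereas the paper telescopes by showing $s\mapsto s\ell_n(\theta)-\ell_{ns}(\theta)$ is strictly decreasing). Your closing observation is also right: since your pairing only ever requires $\theta^2>(ns-1)(n-1)$, you get the marginally weaker sufficient condition $\theta>\sqrt{(n-1)(ns-1)}$, which the paper's own telescoping argument would likewise yield if one noted that the binding inductive step is the one from $s-1$ to $s$ rather than from $s$ to $s+1$.
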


\begin{proof}
Define the function $g:s\mapsto g(s)$ as
\[ g(s) =  s\ell_n(\theta) - \ell_{ns}(\theta). \]
First, when $n=1$, it holds that $g(s) =  - \ell_{s}(\theta) < 0$ for $s=2,3,\ldots$.
Next, consider $n\geq2$.
Since $g(1)=0$, it is sufficient to show that $g(s+1) < g(s)$ when \eqref{cop} is satisfied.
It holds that
\begin{eqnarray*}
&& g(s+1) - g(s) \\
&=& \ell_n(\theta) - \ell_{n(s+1)}(\theta) + \ell_{ns}(\theta) \\
&=& \sum_{i=1}^n \frac{i-1}{(\theta+i-1)^2} - \sum_{i=ns+1}^{ns+n} \frac{i-1}{(\theta+i-1)^2} \\
&=& \sum_{i=1}^n\left\{ \frac{i-1}{(\theta+i-1)^2} - \frac{ns + i-1}{(\theta+ns+i-1)^2} \right\} \\
&=& \sum_{i=1}^n\left\{ \frac{(i-1)(\theta+ns+i-1)^2 - (ns + i-1)(\theta+i-1)^2}{(\theta+i-1)^2(\theta+ns+i-1)^2} \right\}, 
\end{eqnarray*}
where the numerator of the fraction contained in the brackets on the right-hand side is negative since
\[
 ns \left\{ -\theta^2 + (i-1)(ns + i - 1)  \right\} 
\leq  ns \left\{ -\theta^2 + (n-1)(ns + n - 1)  \right\} 
< 0.
\]
Therefore it holds that $g(s+1) < g(s)$, which implies $g(s) < g(1) = 0$ for $s = 2,3,\ldots$.
This completes the proof.
\end{proof}

\subsection{Asymptotic results}\label{AR}

In this subsection, we consider situations where two of $n,s,$ and $\theta$ tend to infinity simultaneously leaving the rest one fixed.
The results presented in this subsection are summarized in Table~\ref{asytab}, where $\mathcal{I}_1(\theta;n,s)$ and $\mathcal{I}_2(\theta;n,s)$ are denoted by $\mathcal{I}_1$ and $\mathcal{I}_2$, respectively.

\begin{table}[t]
\centering
\caption{Asymptotic values of $s\ell_n(\theta)$ and $\ell_{ns}(\theta)$ and the asymptotic magnitude relations of the corresponding Fisher information}
  \begin{tabular}{lccc}
 \hline
    asymptotic setting & $s\ell_n(\theta)$ & $\ell_{ns}(\theta)$ & magnitude relation \\ \hline
(I)		$n/\theta\to\infty$, $s$:fixed & $s\log(n/\theta)$ & $\log(n/\theta)$ & $\mathcal{I}_1 > \mathcal{I}_2$ \\
(II)		$n/\theta\to0$, $s$:fixed  & $n^2s/(2\theta^2)$ & $n^2s^2/(2\theta^2)$ & $\mathcal{I}_1 < \mathcal{I}_2$ \\ \hline
(III)		$s/\theta\to\infty$, $n$:fixed  & $n^2s/(2\theta^2)$ & $\log(s/\theta)$ & case-by-case \\
(IV)		$s/\theta\to c\neq0$, $n$:fixed  & $n^2s/(2\theta^2)$ & $\ell_{ns}(\theta)$ & $\mathcal{I}_1 < \mathcal{I}_2$ \\
(V)		$s/\theta\to0$, $n$:fixed  & $n^2s/(2\theta^2)$ & $n^2s^2/(2\theta^2)$ &  $\mathcal{I}_1 < \mathcal{I}_2$ \\ \hline
(VI)		$n,s\to\infty$, $\theta$:fixed  & $s\log{n}$ & $\log(ns)$ &  $\mathcal{I}_1 > \mathcal{I}_2$ \\ 
\hline
  \end{tabular}
\label{asytab}
\end{table}

Six different cases with various asymptotic settings are considered as follows:

(I) $n/\theta \to \infty$, leaving $s\geq2$ fixed\\
Since
\[s \ell_n(\theta) \sim s \log\left( \frac{n}{\theta} \right), \quad
\ell_{ns}(\theta) \sim  \log\left( \frac{n}{\theta} \right), \]
it follows that $s \ell_n(\theta)>\ell_{ns}(\theta)$ asymptotically.
This corresponds to Theorem \ref{th0} since $\ell_{\lfloor \theta \rfloor}(\theta) \asymp 1$ as $\theta \to \infty$.\\

(II) $n/\theta\to0$, leaving $s\geq2$ fixed\\
Since
\[s \ell_n(\theta) \sim s \frac{n^2}{2\theta^2}, \quad
\ell_{ns}(\theta) \sim  \frac{n^2 s^2}{2\theta^2}, \]
it follows that $s \ell_n(\theta) < \ell_{ns}(\theta)$ asymptotically.
This corresponds to Theorem \ref{th1}.\\

(III) $s/\theta \to \infty$, leaving $n\geq2$ fixed\\
Since
\[s \ell_n(\theta) \sim s \frac{n^2}{2\theta^2}, \quad
\ell_{ns}(\theta) \sim  \log\left( \frac{s}{\theta} \right), \]
there are three possible cases that differ in terms of the asymptotic magnitude relation between $s$ and $\theta^2\log{(s/\theta)}$:
(a) If $s/(\theta^2\log{(s/\theta)})\to\infty$ then $s \ell_n(\theta) > \ell_{ns}(\theta)$ asymptotically, which corresponds to Theorem \ref{thn} (see Remark \ref{remn2}).
(b) If $s/(\theta^2\log{(s/\theta)})\to0$ then $s \ell_n(\theta) < \ell_{ns}(\theta)$ asymptotically, which partly corresponds to Theorem \ref{th1}.
(c) If $s \asymp \theta^2\log{(s/\theta)}$ then 
\[s \ell_n(\theta) \sim \frac{n^2 K}{2} \log\theta, \quad
\ell_{ns}(\theta) \sim  \log\theta, \]
where $K = \lim s/ (\theta^2\log{(s/\theta)} ) $; therefore, in this case, the asymptotic magnitude relation between $s \ell_n(\theta)$ and $\ell_{ns}(\theta)$ is determined by the magnitude relation between $K$ and $2/n^2$.\\

(IV) $s \asymp \theta$, leaving $n\geq2$ fixed\\
Since
\[s \ell_n(\theta) \sim s \frac{n^2}{2\theta^2}\to0\]
and
\[\liminf_{s,\theta} \ell_{ns}(\theta) > \liminf_{s,\theta} \left\{ \frac{ns(ns-1)}{2\theta^2} \frac{1}{(1+ns/\theta)^2} \right\} > 0, \]
which follows from 
\[ \ell_{ns}(\theta) > \frac{1}{(\theta+ns)^2} \sum_{i=1}^{ns} (j-1) = \frac{ns(ns-1)}{2\theta^2} \frac{1}{(1+ns/\theta)^2}, \]
it holds that $s \ell_n(\theta) < \ell_{ns}(\theta)$ asymptotically.
This corresponds to Theorem \ref{th1}.\\

(V) $s / \theta\to 0$, leaving $n\geq2$ fixed\\
Since
\[s \ell_n(\theta) \sim s \frac{n^2}{2\theta^2}, \quad
\ell_{ns}(\theta) \sim  \frac{n^2 s^2}{2\theta^2}, \]
it follows that $s \ell_n(\theta) < \ell_{ns}(\theta)$ asymptotically.
This corresponds to Theorem \ref{th1}.\\

(VI) $n,s \to \infty$, leaving $\theta$ fixed\\
Since
\[s \ell_n(\theta) \sim s \log{n} , \quad
\ell_{ns}(\theta) \sim  \log\left(ns \right), \]
it follows that $s \ell_n(\theta)>\ell_{ns}(\theta)$ asymptotically.
This corresponds to Theorem \ref{th0}.\\

\section{Concluding remarks}
Under the assumption of the Poisson--Dirichlet population, we have presented that two ways (i) and (ii) of sampling procedures have different information, that is to say, they lead totally different results.
The reason of this phenomenon is that the Ewens sampling formula represents the law of samples from the Poisson--Dirichlet population, a typical random discrete distribution.
By virtue of the de Finetti theorem, for an exchangeable sequence there exists a directing measure such that the sequence is conditionally iid.
Our result indicates that when data analyses using random distributions are conducted, it is crucial to decide whether the data in interest is sample from an unobservable identical directing measure or not.

\appendix
\section{Appendix: The asymptotic normality of $T$}
In this Appendix, the following proposition, which was mentioned in Remark \ref{remclt}, is proven.

\begin{prop}\label{PAN}
If
\begin{equation}
 \frac{sn^2}{\theta} \to \infty, \label{NSC}
\end{equation}
then
\begin{equation}
\frac{T - s \theta L_n(\theta)}{\sigma } \Rightarrow {\sf N}(0,1) \label{Tan}
\end{equation}
where $T$ is as defined in \eqref{defT} and $\sigma = \sigma(n,s,\theta) = (s \theta \ell_n(\theta) )^{1/2}$.
Moreover, when $n\to\infty$, \eqref{NSC} is also necessary for \eqref{Tan}.
\end{prop}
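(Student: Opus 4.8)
The plan is to exploit the product form of the moment generating function \eqref{mgfT}, which exhibits $T$ as a sum of independent indicators. Since
\[
E[\ne^{Tt}] = \prod_{j=1}^n \left( \frac{\theta}{\theta+j-1}\ne^t + \frac{j-1}{\theta+j-1} \right)^s,
\]
I may write $T \stackrel{d}{=} \sum_{k=1}^s \sum_{j=1}^n B_{kj}$, where the $B_{kj}$ are independent Bernoulli variables with ${\rm pr}(B_{kj}=1) = p_j := \theta/(\theta+j-1)$. The centering and scaling in \eqref{Tan} are then exactly the mean and standard deviation of this sum, $\sum_{k,j} E[B_{kj}] = s\theta L_n(\theta)$ and $\sum_{k,j} \var(B_{kj}) = s\theta \ell_n(\theta) = \sigma^2$. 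Note that $p_1=1$, so the indicators with $j=1$ are degenerate and the fluctuations are driven by the terms $j\geq2$. This reduces the assertion to a classical central limit question for a triangular array of independent, uniformly bounded summands.

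For sufficiency I would verify the Lyapunov condition with exponent three. Because $|B_{kj}-p_j|\leq1$, one has $E[|B_{kj}-p_j|^3] \leq \var(B_{kj})$, so the Lyapunov ratio is bounded by $\sigma^{-3}\sum_{k,j}\var(B_{kj}) = \sigma^{-1}$. Hence $(T - s\theta L_n(\theta))/\sigma \Rightarrow {\sf N}(0,1)$ as soon as $\sigma^2 = s\theta\ell_n(\theta)\to\infty$, and it remains only to show that \eqref{NSC} forces this divergence. Here I would use two-sided bounds on $\ell_n(\theta)$: the elementary lower bound $\ell_n(\theta) \geq n(n-1)/\{2(\theta+n-1)^2\}$ handles the range $\theta \gtrsim n$, giving $\sigma^2 \gtrsim sn^2/\theta$ directly, while for $\theta \lesssim n$ an elementary lower bound of order $\log(n/\theta)$ (extracted by comparing the sums in $\ell_n(\theta)=L_n(\theta)-\theta\sum_{i=1}^n(\theta+i-1)^{-2}$ with the estimates for $L_n$ and $\sum_{i=1}^n(\theta+i-1)^{-2}$ used in the proof of Theorem \ref{thn}, following \cite{RefT}) shows that $\sigma^2$ diverges together with $sn^2/\theta$.

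For the converse, the key point is that a standardized sum of independent Bernoulli variables can converge to the continuous standard normal law only if its variance tends to infinity. Indeed, $T$ is integer valued, so if $\sigma$ stayed bounded along a subsequence the normalized variable would be supported on a lattice whose span is bounded away from zero (or, when $\sigma\to0$, would degenerate), precluding convergence to ${\sf N}(0,1)$; a Poisson-type approximation makes the non-normal limit explicit. Thus \eqref{Tan} implies $\sigma^2\to\infty$, and combining this with the upper bound $\ell_n(\theta)\leq n(n-1)/(2\theta^2)$, which yields $sn^2/\theta > 2\sigma^2$, gives \eqref{NSC}.

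The Lyapunov verification is routine; the main obstacle is the translation between $\sigma^2\to\infty$ and \eqref{NSC}. The necessity half of this translation is painless because the needed bound on $\ell_n(\theta)$ is one sided and clean, but the sufficiency half requires the regime split $\theta\lesssim n$ versus $\theta\gtrsim n$ together with a genuinely sharp lower bound on $\ell_n(\theta)$ of order $\log(n/\theta)$ in the former range, and controlling $\ell_n(\theta)$ on that scale is where the real work lies. A secondary subtlety is making the ``no normal limit under bounded variance'' step rigorous, for which identifying the limit as a (compound) Poisson law, or a direct characteristic-function and lattice-span argument, is the cleanest route.
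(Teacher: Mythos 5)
Your proposal is correct and, for the decomposition and the sufficiency half, essentially coincides with the paper: the paper also reads off from \eqref{mgfT} that $T$ is distributed as a sum of $ns$ independent Bernoulli variables with success probabilities $\theta/(\theta+j)$, $j=0,\dots,n-1$, each repeated $s$ times, invokes the central limit theorem for bounded summands (your explicit Lyapunov computation with $E[|B-p|^3]\le \var(B)$ is exactly what that citation hides), and reduces sufficiency to showing $\sigma^2=s\theta\ell_n(\theta)\to\infty$ via the same regime split $n/\theta\to\infty$ or $\to c$ versus $n/\theta\to0$; note that, like the paper, you implicitly need $\theta$ bounded away from zero (or growing) for the $\theta\lesssim n$ regime, since $s\theta\log(n/\theta)$ need not diverge when $\theta\to0$ even though $sn^2/\theta\to\infty$. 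Where you genuinely depart from the paper is the necessity half. The paper first checks uniform asymptotic negligibility ($\max_i p_i(1-p_i)/\sigma^2\to0$ when $ns<\theta$), concludes via the Lindeberg--Feller theorem that the Lindeberg condition is necessary for \eqref{xic}-type convergence, and then exhibits a choice of $\ve$ for which the Lindeberg sum is bounded below by a quantity asymptotic to $\theta^3/(\theta+n)^3\not\to0$. You instead observe that $T$ is integer valued, so if $\sigma$ stays bounded along a subsequence the standardized variable lives on a lattice of span $1/\sigma$ bounded away from zero and therefore carries an atom of mass bounded below on any compact interval, which is incompatible with convergence to a continuous law; combined with the one-sided bound $\ell_n(\theta)\le n(n-1)/(2\theta^2)$, i.e.\ $sn^2/\theta>2\sigma^2$, this yields \eqref{NSC}. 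Your route is more elementary (it avoids the converse part of the Lindeberg--Feller theorem and the restriction to $ns<\theta$) and in fact does not use $n\to\infty$ at all, so it proves a slightly stronger necessity statement; the paper's route has the advantage of identifying precisely which summands violate the Lindeberg condition. Either way the argument closes, provided you make the lattice step rigorous, e.g.\ via the uniform convergence of distribution functions to a continuous limit forcing maximal atoms to vanish.
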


\begin{proof}
For $i=1,2,\ldots,ns$, let 
\[ p_i = \frac{\theta}{\theta+j}, \quad (i-1 \equiv j \pmod{n}). \]
Consider a triangular sequence $\{ \xi_i \}_{i=1}^{ns}$ of independent Bernoulli variables, where $E[\xi_i] = p_i$ for all $i=1,\ldots,ns$. 
Then, from \eqref{mgfT}, the distribution of $T$ is the same as the distribution of
\[ \sum_{i=1}^{ns} \xi_i. \]

First, we prove that \eqref{NSC} implies
\begin{equation}
\frac{ \sum_{i=1}^{ns} \xi_i - s \theta L_n(\theta)}{\sigma } \Rightarrow {\sf N}(0,1). \label{xic}
\end{equation}
From the central limit theorem for bounded random variables, it is sufficient to show that $\sigma\to\infty$.
When $n/\theta \to \infty$ or $n/\theta \to c>0$, it follows that $\sigma^2=s\theta \ell_n(\theta)\to\infty$.
When $n/\theta \to 0$, it follows from \eqref{NSC} that
\[ \sigma^2 = s\theta \ell_n(\theta) \sim \frac{sn^2}{2\theta} \to \infty. \]

Next, we prove that when $n\to\infty$ \eqref{xic} implies \eqref{NSC}.
Assume that \eqref{NSC} does not hold.
Consider $n,s,\theta$ such that $ns <\theta$.
Since $\sigma^2 \sim s n^2/(2\theta)$ and
\[ p_i(1-p_i) \leq \theta \frac{ns-1}{(\theta+ns-1)^2} \sim \frac{ns}{\theta}\]
for all $i$ which follows from the fact that $x(\theta+x)^{-2}$ is increasing for $x<\theta$, it holds that 
\[ \frac{1}{\sigma^{2}} \max_{1 \leq i\leq ns} \{ p_i (1-p_i) \} \to 0 . \]
Hence, in order for \eqref{xic} to hold, the Lindeberg condition
\begin{equation}
 \lim_{n,s,\theta} \frac{1}{\sigma^2}\sum_{i=1}^{ns} E\left[ |\xi_i - p_i|^2 1\{ |\xi_i - p_i|> \ve \sigma \} \right] =0 \label{LC}
\end{equation}
for any $\ve>0$ is necessary where $1\{ \cdot \}$ is the indicator function, so we see that \eqref{LC} does not hold.
Since
\[\sup_{n,s,\theta} \left( s \ell_n(\theta) \right) <\infty, \]
we can take
\[ \ve = \inf_{n,s,\theta} \left( \frac{\theta}{\sigma( \theta+n)} \right) >0 .\]
Then, we have
\[ \frac{p_i}{\sigma} > \frac{\theta}{\sigma(\theta+n)}  > \ve \]
for all $i=1,\ldots,ns$, which yields that
\begin{eqnarray*} 
&& E\left[ |\xi_i - p_i|^2 1\{ |\xi_i - p_i|> \ve \sigma \} \right] \\
&=& (1-p_i)^2 p_i 1\{ 1 - p_i> \ve \sigma \} + (p_i)^2 (1-p_i) 1\{  p_i> \ve \sigma \} \\
&\geq& (p_i)^2 (1-p_i).
\end{eqnarray*}
Therefore, we have
\begin{eqnarray*} 
&& \frac{1}{\sigma^2}\sum_{i=1}^{ns} E\left[ |\xi_i - p_i|^2 1\{ |\xi_i - p_i|> \ve \sigma \} \right] \\
&\geq& \frac{1}{\sigma^2} \sum_{i=1}^{ns}  (p_i)^2 (1-p_i) \\
&=& \frac{s}{\sigma^2} \sum_{j=0}^{n-1}  \left( \frac{\theta}{\theta+j} \right)^2 \left( \frac{j}{\theta+j} \right) \\
&>& \frac{s\theta^2}{\sigma^2} \sum_{j=0}^{n-1} \frac{j}{(\theta+n)^3} \\
&=& \frac{s \theta^2 n (n-1)}{2 (\theta+n)^3 \sigma^2 }.
\end{eqnarray*}
When \eqref{NSC} does not hold, it holds that $\sigma^2 \sim sn^2/(2\theta)$, which guarantees that the right-hand side of the above display can not converge to 0 because
\[ \frac{s \theta^2 n (n-1)}{2 (\theta+n)^3 \sigma^2 } \sim  \frac{ \theta^3 }{ (\theta+n)^3  } \not\to 0. \]

This completes the proof.
\end{proof}

Letting $s=1$ in Proposition \ref{PAN}, we have the following corollary, which is essentially equivalent to Theorem 2 of \cite{RefT} (the necessary part is given from Theorem 3 of \cite{RefT}).

\begin{cor}\label{CAN}
$ n^2/\theta \to \infty$ is equivalent to
\[ \frac{X - \theta L_n(\theta)}{\sqrt{\theta \ell_n(\theta)} } \Rightarrow {\sf N}(0,1), \]
where $X$ follows the falling factorial distribution with parameter $(n,\theta)$.
\end{cor}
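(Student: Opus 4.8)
The plan is to derive the corollary directly by specializing Proposition \ref{PAN} to $s=1$, so that essentially no new argument is required beyond checking that the objects match up. First I would observe that when $s=1$ the statistic $T=\sum_{i=1}^s X_i$ defined in \eqref{defT} collapses to the single variable $X_1=X$, which follows the falling factorial distribution with parameter $(n,\theta)$; this is consistent with the observation following \eqref{mgfT} that $T$ fails to be falling factorial precisely when $s\neq1$. Correspondingly, the centering $s\theta L_n(\theta)$ reduces to $\theta L_n(\theta)$ and the scaling $\sigma=(s\theta\ell_n(\theta))^{1/2}$ reduces to $(\theta\ell_n(\theta))^{1/2}$, so the standardized quantity in \eqref{Tan} becomes exactly $(X-\theta L_n(\theta))/\sqrt{\theta\ell_n(\theta)}$.

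Next I would match the conditions. The sufficient condition \eqref{NSC}, namely $sn^2/\theta\to\infty$, becomes $n^2/\theta\to\infty$ when $s=1$. The first assertion of Proposition \ref{PAN} then delivers the forward (sufficiency) implication of the claimed equivalence at once. For the reverse (necessity) implication I would invoke the second assertion of Proposition \ref{PAN}, which states that, when $n\to\infty$, condition \eqref{NSC} is also necessary for the convergence \eqref{Tan}; setting $s=1$, this says precisely that $n^2/\theta\to\infty$ is necessary for the stated central limit theorem. Combining the two directions yields the equivalence.

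The only subtlety, rather than a genuine obstacle, is the precise meaning of ``equivalent'': because the necessity half of Proposition \ref{PAN} is established under $n\to\infty$, the equivalence asserted in the corollary must be read within that asymptotic regime, which is the natural one for a central limit statement. With that reading fixed, the corollary is an immediate specialization, and it reproduces the sufficiency part of Theorem 2 and the necessity part of Theorem 3 of \cite{RefT}, as indicated in the surrounding text.
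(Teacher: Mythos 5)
Your proposal is correct and matches the paper exactly: the paper derives Corollary \ref{CAN} precisely by setting $s=1$ in Proposition \ref{PAN}, with the sufficiency and (for $n\to\infty$) necessity halves read off from the two assertions of that proposition. Your remark about the necessity direction being established only in the regime $n\to\infty$ is a fair and accurate caveat that the paper leaves implicit.
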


\begin{rem}
When $n\to\infty$, Proposition \ref{PAN} yields that \eqref{Tan} holds only if $n^2s/\theta \to \infty$.
On the contrary, when $n\to\infty$, Corollary \ref{CAN} yields that 
\begin{equation}
\frac{ Y - \theta L_{ns}(\theta) }{\sqrt{\theta \ell_{ns} (\theta)}} \Rightarrow {\sf N}(0,1) \label{Yan}
\end{equation}
if (and only if) $n^2s^2/\theta \to \infty$, where $Y$ follows the falling factorial distribution with parameter $(ns,\theta)$.
Hence, even when \eqref{Tan} does not hold, \eqref{Yan} can hold.
\end{rem}

\section*{Acknowledgments}
The authors would like to express their gratitude to Professor Nobuaki Hoshino for explaining his motivation in disclosure risk assessments concerning Remark \ref{remH} and Professor Masaaki Sibuya for providing us valuable comments.
This work was supported by Grant-in-Aid for Scientific Research (B) 16H02791, from Japan Society for the Promotion of Science.

\end{document}